  \newtheorem{theo}{Theorem}
  \newtheorem{lemm}{Lemma}
  \newtheorem{remark}{Remark}[section]
  \newtheorem{assumption}{Assumption}
  \newtheorem{notation}{Notation}
  \def\hoge<#1>{\langle #1 \rangle}
  \providecommand{\keywords}[1]
{
  \small
  \textbf{\textit{Keywords---}} #1
}
\DeclareMathOperator*{\argmax}{arg\,max}
\title{Parameter estimation of stochastic differential equation driven by small fractional noise}
\author{Shohei Nakajima\thanks{email:~st08m26@akane.waseda.jp}}
\author{Yasutaka Shimizu\thanks{email:~shimizu@waseda.jp}}
\affil{Department of Applied Mathmatics, Waseda University\\ 3-4-1, Shinjuku, Okubo, Tokyo, 169-8555, Japan}
\date{}
\begin{document}
\maketitle
\begin{abstract}
  We study the problem of parametric estimation for continuously observed stochastic processes driven by additive small fractional Brownian motion with Hurst index $H\in(0,1)/\{\frac{1}{2}\}$. Under some assumptions on the drift coefficient, we obtain the asymptotic normality and moment convergence of maximum likelihood estimator of the drift parameter when a small dispersion coefficient $\varepsilon\rightarrow 0$.
\end{abstract}
\keywords{parameter estimation, stochastic differential equation, fractional Brownian motion, small noise, asymtotic normality}
 \section{Introduction}
 Let $\{X_t^\varepsilon\}_{t\in[0,T]}$ be a solution to the following stochastic differential eqution:
 \begin{equation}
   \label{SDE}
   X^\varepsilon_t=x+\int_0^tb(X_s^\varepsilon,\theta_0)ds+\varepsilon B_t^H,~~~t\in(0,T],
 \end{equation}
 where $\{B_t^H\}_{t\in[0,T]}$ is a fractional Brownian motion with Hurst index $H\in(0,1)/\{\frac{1}{2}\}$ and $\theta_0\in\Theta$ is the parameter which is contained in a bounded and open convex  subset  $\Theta\subset\mathbb{R}^d$ admitting Sobolev's inequalities for embedding $W^{1,p}(\Theta)\hookrightarrow C(\bar{\Theta})$. Without loss of generality, we assume that $\varepsilon\in(0,1]$. The main purpose of this paper is the estimation of parameter $\theta\in\Theta$ from a realization $\{X_t^\varepsilon\}_{t\in[0,T]}$ when $\varepsilon\rightarrow 0.$
  In the case where $H=1/2$, that is, $\{B_t^H\}_{t\in[0,T]}$ is a Brownian motion, estimation problems have been studied by many authors. In particular, the maximum likelihood estimator (MLE) via the likelihood function based on the Girsanov density is the one of the optimal methods for estimation (see Pracasa Rao \cite{Rao}, Liptser and Shiryaev \cite{Liptser} and Kutoyants \cite{Kutoyants}).

  The parametric inference for stochastic differential equation driven by fractional Brownian motion have been studied by Brouste and Kleptsyna \cite{Brouste}, Kleptsyna and Le Breton \cite{Kleptsyna} and Tudor and Viens \cite{Tudor} when the MLE has an explicit expression. Recently, in the case when the MLE does not have explicit form, Chiba \cite{Chiba} proposed an M--estimator based on the likelihood function, and studied its asymptotic properties when the Hurst index $H$ is contained in $(\frac{1}{4},\frac{1}{2})$.

  The parametric inference for diffusion processes with small white noise has been well developed (see, e.g., Kutoyants \cite{s1Kutoyants}, \cite{s2Kutoyants}, Uchida and Yoshida \cite{Uchida}, Yoshida \cite{s1Yoshida} and \cite{s2Yoshida}). However, parametric estimation problems for the stochastic differential equation driven by small fractional Brownian motion has not been analyzed yet. The main tool to obtain the asymtotic properties of estimators when they do not have explicit expression is investigated by Ibragimov and Has'minskii \cite{Ibragimov}. In our case, the MLE does not have explicit form and we rely on the approach of Ibragimov and Has'minski. Their approach is based on the analysis of the likelihood ratio random field, where the large deviation inequality plays an important role to derive the asymtotic properties. We aim to deduce asymtotic properties of the maximum likelihood estimator when $\varepsilon\rightarrow 0$ in the spirit of Ibragimov and Has'minskii.

  This paper organized as follows: in Section 2 we make some notations and assumptions to state our main results. In Sect. 3 we prove main results. Most of the proof is checking out the sufficient conditions of the polynomial type large deviation inequality investigated by Yoshida \cite{Yoshida}.
 \section{Main results}

  Let $(\Omega,\mathcal{F},P)$ be a probability space. We assume that the parameter space $\Theta\in\mathbb{R}^d$ to be bounded, oepn and convex domain admitting Sobolev embedding $W^{1,p}(\Theta)\hookrightarrow C(\bar{\Theta})$ for $p>d$. We aim to estimate the unknown parameter $\theta_0\in\Theta$ in the equation \eqref{SDE} from completely observed data $\{X_t^\varepsilon\}_{t\in[0,T]}$. Let us define some functions appearing in the likelihood function for the equation \eqref{SDE}. We first recall the basic definitions of fractional calculus. Let $f\in L^1(a,b)$ for $a<b$ and $\alpha>0$.
  The fractional Riemann--Liouville integrals of $f$ of order $\alpha$ are defined for almost all $x\in(a,b)$ by
\begin{equation*}
  I_{a+}^{\alpha}f(x):=\frac{1}{\Gamma(\alpha)}\int_a^x(x-y)^{\alpha-1}f(y)dy,
\end{equation*}
and
\begin{equation*}
  I_{b-}^{\alpha}f(x):=\frac{1}{\Gamma(\alpha)}\int_x^b(y-x)^{\alpha-1}f(y)dy.
\end{equation*}
Let $I^\alpha_{a+}(L^p(a,b))$ (resp.$I^\alpha_{b-}(L^p(a,b))$) be the image of $L^p(a,b)$ by the operator $I^\alpha_{a+}(a,b)$  (resp.$I^\alpha_{b-}(L^p(a,b))$). If $f\in I^\alpha_{a+}(L^p(a,b))$ (resp.$I^\alpha_{b-}(L^p(a,b))$) and $0<\alpha<1$ then the Weyl derivative are defined by
\begin{equation*}
  D_{a+}^{\alpha}f(x):=\frac{1}{\Gamma(1-\alpha)}\left(\frac{f(x)}{(x-a)^{\alpha}}+\alpha\int_a^x\frac{f(x)-f(y)}{(x-y)^{\alpha+1}}dy\right),
\end{equation*}
and
\begin{equation*}
    D_{b-}^{\alpha}f(x):=\frac{1}{\Gamma(1-\alpha)}\left(\frac{f(x)}{(b-x)^{\alpha}}+\alpha\int_x^b\frac{f(x)-f(y)}{(x-y)^{\alpha+1}}dy\right).
\end{equation*}

 There are many well--known results for equation \eqref{SDE}. According to \cite{Nualart}, the existence and uniqueness of a strong solution to equation \eqref{SDE} follows under Assumption \ref{(A1)} and \ref{(A2)} described below. In addition, for every $0<\varepsilon<H$, the solution to \eqref{SDE} has $H-\varepsilon$ H\"older continuity. From the H\"older continuity of the solution to \eqref{SDE}, we can define the function (see Theorem 13.6 in \cite{Samko})
    \begin{equation*}
      Q_{H,\theta}^\varepsilon(t):=
      \begin{cases}
        \left(\varepsilon d_H\right)^{-1}t^{H-1/2}I_{0+}^{1/2-H}\left[(\cdot)^{1/2-H}b(X^\varepsilon_\cdot,\theta)\right]& {\rm if}~H<1/2\\
        \left(\varepsilon d_H\right)^{-1}t^{H-1/2}D_{0+}^{H-1/2}\left[(\cdot)^{1/2-H}b(X^\varepsilon_\cdot,\theta)\right]& {\rm if}~H>1/2.
      \end{cases}
    \end{equation*}
    where
    \begin{equation*}
      d_H:=\sqrt{\frac{2H\Gamma(\frac{3}{2}-H)\Gamma(H+\frac{1}{2})}{\Gamma(2-2H)}}.
    \end{equation*}
 For $0<s<t$, let $k_H^{-1}(t,s)$ be a functin given by
 \begin{equation*}
        k_H^{-1}(t,s):=
        \begin{cases}
            \frac{1}{d_H}s^{1/2-H}I_{t-}^{1/2-H}\left[(\cdot)^{H-1/2}\right]&{\rm if}~H<1/2\\
            \frac{1}{d_H}s^{1/2-H}D_{t-}^{H-1/2}\left[(\cdot)^{H-1/2}\right]&{\rm if}~H>1/2.
        \end{cases}
 \end{equation*}
We define a semimartingale $\{Z_t\}_{t\ge 0}$ as follows:
\begin{equation*}
 \begin{aligned}
   Z_t:&=\varepsilon^{-1}\int_0^Tk_H^{-1}(t,s)dX_s\\
   &=\int_0^tQ_{H,\theta}^\varepsilon(s)ds+W_t,
 \end{aligned}
\end{equation*}
where $\{W_t\}_{0\le t\le T}$ is a Wiener process. Note that we used the Volterra correspondence
\begin{equation*}
  W_t=\int_0^tk_H^{-1}(t,s)dB_s^H.
\end{equation*}
 Here we interpret the stochastic integral with respect to a fractional Brownian motion as a Wiener integral. The log--likelihood function $\mathbb{L}_{H,\varepsilon}$ for the equation \eqref{SDE} is given by
\begin{equation*}
 \mathbb{L}_{H,\varepsilon}(\theta):=\int_0^TQ_{H,\theta}^\varepsilon(t)dZ_t-\frac{1}{2}\int_0^TQ_{H,\theta}^\varepsilon(t)^2dt.
\end{equation*}
For more details about construction of the likelihood function, see \cite{Tudor}. We define the maximum lilelihood estimator by
\begin{equation*}
 \hat{\theta}_\varepsilon:=\argmax_{\theta\in\bar{\Theta}}\mathbb{L}_{H,\varepsilon}(\theta).
\end{equation*}
In order to state our main results, we make some notations. Let $\{x_t\}_{0\le t\le T}$ be the solution to the differential equation under the true value of the drift parameter:
\begin{equation}
 \label{ODE}
  \left \{
  \begin{aligned}
    \frac{dx_t}{dt}&=b(x_t,\theta_0)\\
    x_0&=x.
  \end{aligned}\right .
\end{equation}
We set the $d$--dimensional square matrix $\Gamma_H(\theta_0)$ as
\begin{equation*}
 \Gamma_H^{i,j}(\theta_0):=
 \begin{cases}
   c_1\int_0^Tt^{2H-1}\left(\int_0^ts^{1/2-H}(t-s)^{-1/2-H}\partial_{\theta_i}b(x_s,\theta_0)ds\right)\\
   ~~~~~~~~~~~~~~~\times\left(\int_0^ts^{1/2-H}(t-s)^{-1/2-H}\partial_{\theta_j}b(x_s,\theta_0)ds\right)dt& {\rm if}~H<1/2\\
   \int_0^T\Biggl(c_2t^{1/2-H}\partial_{\theta_i} b(x_t,\theta_0)+c_3t^{H-1/2}\int_0^t\frac{\partial_{\theta_i}b(x_t,\theta_0)-\partial_{\theta_i}b(x_s,\theta_0)}{(t-s)^{H+1/2}}s^{1/2-H}ds\Biggl)\\
   \times\Biggl(c_2t^{1/2-H}\partial_{\theta_j} b(x_t,\theta_0)+c_3t^{H-1/2}\int_0^t\frac{\partial_{\theta_j}b(x_t,\theta_0)-\partial_{\theta_j}b(x_s,\theta_0)}{(t-s)^{H+1/2}}s^{1/2-H}ds\Biggl)dt&{\rm if}~H>1/2,
 \end{cases}
\end{equation*}
where
\begin{equation*}
 \begin{aligned}
   c_1&=\left(d_H\Gamma(1/2-H)\right)^{-2}\\
   c_2&=\left(d_H\Gamma(3/2-H)\right)^{-1}\left\{1+(H-1/2)\int_0^1\frac{s^{1/2-H}-1}{(1-s)^{-H-1/2}}ds\right\}\\
   c_3&=(H-1/2)\left(d_H\Gamma(3/2-H)\right)^{-1}.
 \end{aligned}
\end{equation*}
Let
\begin{equation*}
  \begin{aligned}
      \mathbb{Y}_{H,\varepsilon}(\theta):=&\varepsilon^2\left(\mathbb{L}_{H,\varepsilon}(\theta)-\mathbb{L}_{H,\varepsilon}(\theta_0)\right)\\
      \mathbb{Y}_H(\theta):=&
      \begin{cases}
        c_1\int_0^Tt^{2H-1}\left\{\int_0^ts^{1/2-H}(t-s)^{-1/2-H}\left(b(x_s,\theta)-b(x_s,\theta_0)\right)ds\right\}^2dt& {\rm if}~H<1/2\\
        \int_0^T\Biggl(c_2t^{1/2-H} \left(b(x_t,\theta)-b(x_t,\theta_0)\right)\\
        \hspace{75pt}+c_3\int_0^t\frac{\left(b(x_t,\theta_0)-b(x_t,\theta)\right)-\left(b(x_s,\theta_0)-b(x_s,\theta)\right)}{(t-s)^{H+1/2}}s^{1/2-H}ds\Biggl)^2dt&{\rm if}~H>1/2.
      \end{cases}
  \end{aligned}
\end{equation*}
We introduce the following assumptions.
\begin{assumption}
  \label{(A1)}
  The function $b$ in \eqref{SDE} is of $C^{1,4}(\mathbb{R}\times\Theta;\mathbb{R})$--class such that for every $x\in\mathbb{R}$ and $\theta\in\Theta$,
the following growth conditions hold:
\begin{equation*}
  \begin{aligned}
    &|b(x,\theta)|\le c(1+|x|),~|\nabla^i_\theta b(x,\theta)|\le c(1+|x|^N),~|\nabla^i_\theta\partial_x b(x,\theta)|\le c(1+|x|^N),
  \end{aligned}
\end{equation*}
for $0\le i\le 4$ and some constants $c>0,~N\in\mathbb{N}$.
  \end{assumption}
  \begin{assumption}
    \label{(A2)}
    There exists $L>0$ such that for every $x,y\in\mathbb{R}$,
    \begin{equation*}
      \sup_{\theta\in\Theta}|b(x,\theta)-b(y,\theta)|\le L|x-y|.
    \end{equation*}
  \end{assumption}
  \begin{assumption}
    The matrix $\Gamma_H(\theta_0)$ is positive definite.
  \end{assumption}
  \begin{assumption}
    For every $\theta\in\Theta$, there exists a positive constant $\xi>0$ and $1<\rho\le 2$ such that
    \begin{equation*}
      -\mathbb{Y}_H(\theta)\le-\xi|\theta-\theta_0|^\rho
    \end{equation*}
  \end{assumption}
  The following theorem gives the asymtotic properties of the estimator $\hat{\theta}_\varepsilon$.
\begin{theo}
  \label{main}
 Suppose that the assumptions 1--4 are fulfilled. Then the estimator $\hat{\theta}_\varepsilon$ satisfies that
 \begin{equation*}
   \varepsilon^{-1}(\hat{\theta}_\varepsilon-\theta_0)\xrightarrow{d}N(0,\Gamma_H(\theta_0)^{-1})
 \end{equation*}
 as $\varepsilon\rightarrow 0$. Moreover, we have
 \begin{equation*}
   E\left[f\left(\varepsilon^{-1}(\hat{\theta}_\varepsilon-\theta_0)\right)\right]\rightarrow E[f(\xi)]
 \end{equation*}
 as $\varepsilon\rightarrow 0$ for every continuous function $f$ of polynomial
  growth, where $\xi\sim N(0,\Gamma_H(\theta_0)^{-1})$.
\end{theo}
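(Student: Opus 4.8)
The plan is to carry out the Ibragimov--Has'minskii program via Yoshida's polynomial type large deviation inequality (PLDI), with normalizing rate $\varepsilon$. Reparametrize around $\theta_0$ by $u=\varepsilon^{-1}(\theta-\theta_0)$, put $\mathbb{U}_\varepsilon:=\{u\in\mathbb{R}^d:\theta_0+\varepsilon u\in\Theta\}$, and define the normalized likelihood ratio random field
\[
 \mathbb{Z}_\varepsilon(u):=\exp\bigl(\mathbb{L}_{H,\varepsilon}(\theta_0+\varepsilon u)-\mathbb{L}_{H,\varepsilon}(\theta_0)\bigr),\qquad u\in\overline{\mathbb{U}_\varepsilon},
\]
so that $\hat u_\varepsilon:=\varepsilon^{-1}(\hat\theta_\varepsilon-\theta_0)$ is a maximizer of $\mathbb{Z}_\varepsilon$ over $\overline{\mathbb{U}_\varepsilon}$; once consistency is proved, $\hat\theta_\varepsilon$ lies in the interior $\Theta$ with probability tending to $1$, so the use of the closure is harmless. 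Since under $P$ one has $dZ_t=Q^\varepsilon_{H,\theta_0}(t)\,dt+dW_t$, a direct manipulation yields
\[
 \log\mathbb{Z}_\varepsilon(u)=\varepsilon^{-1}\!\int_0^T\!R^\varepsilon_u(t)\,dW_t-\tfrac12\varepsilon^{-2}\!\int_0^T\!R^\varepsilon_u(t)^2\,dt,\qquad R^\varepsilon_u:=\varepsilon\bigl(Q^\varepsilon_{H,\theta_0+\varepsilon u}-Q^\varepsilon_{H,\theta_0}\bigr),
\]
where, by the definition of $Q^\varepsilon_{H,\theta}$, the function $R^\varepsilon_u(t)$ equals $d_H^{-1}t^{H-1/2}$ times the Riemann--Liouville integral (if $H<1/2$) or the Weyl derivative (if $H>1/2$) applied to $(\cdot)^{1/2-H}\bigl(b(X^\varepsilon_\cdot,\theta_0+\varepsilon u)-b(X^\varepsilon_\cdot,\theta_0)\bigr)$, and carries no factor of $\varepsilon$ beyond its dependence on $X^\varepsilon$. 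Equivalently $\mathbb{Z}_\varepsilon(u)$ is the terminal value of the exponential supermartingale $\exp\bigl(M^{(u)}_t-\tfrac12\langle M^{(u)}\rangle_t\bigr)$ with $M^{(u)}_t:=\varepsilon^{-1}\int_0^tR^\varepsilon_u\,dW$.

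The preliminary analytic estimates form the technical core, and I expect the main obstacle to lie here. First, Assumptions \ref{(A1)}--\ref{(A2)} and Gronwall's inequality give, for $\eta^\varepsilon:=\varepsilon^{-1}(X^\varepsilon-x)$, the uniform bound $\sup_{\varepsilon\in(0,1]}E\bigl[\sup_{t\le T}|\eta^\varepsilon_t|^p\bigr]<\infty$ for every $p\ge1$ (using that $\sup_{t\le T}|B^H_t|$ has moments of all orders), together with the $L^p$-convergence, uniform on $[0,T]$, of $\eta^\varepsilon$ to the Gaussian process solving $d\eta_t=\partial_xb(x_t,\theta_0)\eta_t\,dt+dB^H_t$. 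Second, one must show that $Q^\varepsilon_{H,\theta}(t)$ and its $\theta$-derivatives up to order four are well defined and satisfy $\sup_{\varepsilon,\theta}E\bigl[|\nabla^k_\theta Q^\varepsilon_{H,\theta}(t)|^p\bigr]\le g_p(t)$ with $g_p\in L^1(0,T)$, and similarly for the differences between these objects and the ones obtained by substituting $x$ for $X^\varepsilon$. For $H<1/2$ this is a direct consequence of Assumption \ref{(A1)}, the kernel $(t-s)^{-1/2-H}$ being integrable; for $H>1/2$ the Weyl kernel $(t-s)^{-H-1/2}$ is not integrable, so one exploits the $(H-\delta)$-H\"older continuity of $s\mapsto s^{1/2-H}b(X^\varepsilon_s,\theta)$ (inherited from that of $X^\varepsilon$ and the $C^{1,4}$-smoothness of $b$) to control the regularized increments, Assumption \ref{(A2)} rendering the bounds uniform in $\theta$; the polynomial factors $1+|x|^N$ are absorbed using the moment bounds on $X^\varepsilon$ and the growth conditions in Assumption \ref{(A1)}.

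Granting these estimates, a first-order Taylor expansion of $b$ in $\theta$ gives $\varepsilon^{-1}R^\varepsilon_u(t)=\sum_i u_i\,G^\varepsilon_{H,i}(t)+\varepsilon\,r_\varepsilon(t,u)$, where $G^\varepsilon_{H,i}$ arises by applying to $(\cdot)^{1/2-H}\partial_{\theta_i}b(X^\varepsilon_\cdot,\theta_0)$ the same $d_H^{-1}t^{H-1/2}$-weighted fractional operator that defines $R^\varepsilon_u$; it converges in $L^2(0,T)$ and in $L^p(\Omega)$ to the deterministic $G_{H,i}$ obtained by replacing $X^\varepsilon$ with $x$, and the remainder $r_\varepsilon$ has moments bounded uniformly in $\varepsilon$, locally uniformly in $u$. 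Consequently
\[
 \log\mathbb{Z}_\varepsilon(u)=u^\top\Delta_{H,\varepsilon}-\tfrac12u^\top\Gamma_{H,\varepsilon}u+\psi_\varepsilon(u),\qquad (\Delta_{H,\varepsilon})_i:=\int_0^T\!G^\varepsilon_{H,i}\,dW,\quad (\Gamma_{H,\varepsilon})_{ij}:=\int_0^T\!G^\varepsilon_{H,i}G^\varepsilon_{H,j}\,dt,
\]
with $\sup_{|u|\le K}|\psi_\varepsilon(u)|\to0$ in probability for every $K$. Because the integrands $G^\varepsilon_{H,i}$ converge to deterministic limits, $\Delta_{H,\varepsilon}\xrightarrow{d}\Delta_H\sim N(0,\Sigma_H)$ with $(\Sigma_H)_{ij}=\int_0^TG_{H,i}G_{H,j}\,dt$, and $\Gamma_{H,\varepsilon}\to\Sigma_H$ in probability. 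A direct, if lengthy, computation---substituting the explicit forms of the Riemann--Liouville/Weyl operators and, for $H>1/2$, splitting the regularized increment of $s\mapsto s^{1/2-H}\partial_{\theta_i}b(x_s,\theta_0)$ into $(t^{1/2-H}-s^{1/2-H})\partial_{\theta_i}b(x_t,\theta_0)$ and $s^{1/2-H}\bigl(\partial_{\theta_i}b(x_t,\theta_0)-\partial_{\theta_i}b(x_s,\theta_0)\bigr)$ and evaluating the resulting Beta-type integral---identifies $\Sigma_H=\Gamma_H(\theta_0)$; this is the origin of the constants $c_1,c_2,c_3$. By Assumption 3 this matrix is invertible, so the finite-dimensional distributions of $\mathbb{Z}_\varepsilon$ converge to those of $\mathbb{Z}(u)=\exp\bigl(u^\top\Delta_H-\tfrac12u^\top\Gamma_H(\theta_0)u\bigr)$, which has the a.s. unique maximizer $\Gamma_H(\theta_0)^{-1}\Delta_H\sim N(0,\Gamma_H(\theta_0)^{-1})$.

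It then remains to verify the hypotheses of Yoshida's PLDI. The required $L^p$-moment bounds on $\Delta_{H,\varepsilon}$, on $\Gamma_{H,\varepsilon}$ and on the $W^{1,p}$-norm in $u$ of $\psi_\varepsilon$ over $\{u\in\mathbb{U}_\varepsilon:|u|\le K\}$ follow from the estimates of the second paragraph and the $C^{1,4}$-regularity of $b$; here the standing hypothesis $W^{1,p}(\Theta)\hookrightarrow C(\bar\Theta)$, $p>d$, is exactly what lets one dominate suprema over $u$ by such moments. The local nondegeneracy is Assumption 3. For the global separation, Assumption 4 enters through the bracket $\langle M^{(u)}\rangle_T=\varepsilon^{-2}\int_0^TR^\varepsilon_u(t)^2\,dt$: a uniform-in-$\theta$ limit theorem (again via the moment estimates and the Sobolev embedding) shows $\int_0^TR^\varepsilon_u(t)^2\,dt\to\mathbb{Y}_H(\theta_0+\varepsilon u)$, so that on a high-probability event $\langle M^{(u)}\rangle_T\ge\tfrac12\varepsilon^{-2}\xi|\varepsilon u|^\rho=\tfrac12\xi\varepsilon^{\rho-2}|u|^\rho\ge\tfrac12\xi|u|^\rho$ for $\varepsilon\le1$ and $\rho\le2$; the exponential-supermartingale inequality then yields $E[\mathbb{Z}_\varepsilon(u)^{1/2}]\le e^{-c|u|^\rho}$ up to a moment-controlled remainder, uniformly in $\varepsilon$. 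Yoshida's inequality therefore gives $\sup_{\varepsilon\in(0,1]}P(|\hat u_\varepsilon|\ge r)\le C_Mr^{-M}$ for every $M>0$, hence $\sup_{\varepsilon\in(0,1]}E[|\hat u_\varepsilon|^M]<\infty$. Combining this with the finite-dimensional convergence above and the argmax/continuous-mapping theorem of Ibragimov--Has'minskii yields $\varepsilon^{-1}(\hat\theta_\varepsilon-\theta_0)\xrightarrow{d}N(0,\Gamma_H(\theta_0)^{-1})$; and since the uniform $L^M$-bounds for all $M$ make $\{f(\varepsilon^{-1}(\hat\theta_\varepsilon-\theta_0))\}_\varepsilon$ uniformly integrable for every continuous $f$ of polynomial growth, the weak convergence upgrades to $E[f(\varepsilon^{-1}(\hat\theta_\varepsilon-\theta_0))]\to E[f(\xi)]$ with $\xi\sim N(0,\Gamma_H(\theta_0)^{-1})$.
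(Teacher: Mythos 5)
Your proposal follows essentially the same route as the paper: both implement the Ibragimov--Has'minskii program through Yoshida's polynomial type large deviation inequality, expanding the normalized likelihood-ratio field $\log\mathbb{Z}_{H,\varepsilon}(u)$ into a score term, a quadratic term with limit $\Gamma_H(\theta_0)$, and a remainder, establishing uniform $L^p$-bounds (with the Sobolev embedding $W^{1,p}(\Theta)\hookrightarrow C(\bar\Theta)$ handling suprema in $\theta$), using Assumption 4 for the global separation, the martingale CLT for the score, and the resulting tail bounds for uniform integrability and moment convergence. The only minor organizational differences --- you Taylor-expand $b$ inside $Q^\varepsilon_{H,\theta}$ and add an explicit exponential-supermartingale bound $E[\mathbb{Z}_\varepsilon(u)^{1/2}]\le e^{-c|u|^\rho}$, whereas the paper Taylor-expands $\mathbb{L}_{H,\varepsilon}$ in $\theta$ up to third order (Lemmas 2--4) and then cites Yoshida's Theorem 3 directly --- do not change the substance of the argument.
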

 \section{Proofs}
 We first establish a lemma that is used frequently in this paper.
 \begin{lemm}
   \label{s}
   For every $s,t\in[0,T]$,
   \begin{equation*}
     \begin{aligned}
       |X_t^\varepsilon-x_t|&\le \varepsilon e^{LT}\sup_{0\le t\le T}|B_t^H|\\
       |X_t^\varepsilon|&\le e^{CT}\left(CT+\varepsilon\sup_{0\le t\le T}|B_t^H|\right),
     \end{aligned}
   \end{equation*}
   and
   \begin{equation*}
     |X_t^\varepsilon-X_s^\varepsilon|\le C\left(1+\sup_{0\le u\le T}|X_u|\right)|t-s|+\varepsilon|B_t^H-B_s^H|.
   \end{equation*}
 \end{lemm}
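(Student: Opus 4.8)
The plan is to establish all three bounds pathwise, that is, for each fixed $\omega\in\Omega$, using only the integral form \eqref{SDE} of the equation, the linear growth and Lipschitz conditions of Assumptions \ref{(A1)} and \ref{(A2)}, and Gr\"onwall's inequality. No probabilistic input is needed beyond the fact quoted from \cite{Nualart} that a strong solution exists with continuous (indeed H\"older) sample paths, which guarantees that all the integrals below are well defined and that $\sup_{0\le u\le T}|X_u^\varepsilon|$ and $\sup_{0\le u\le T}|B_u^H|$ are finite for almost every $\omega$.

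For the first bound I would subtract \eqref{ODE} from \eqref{SDE} to obtain
\[
X_t^\varepsilon-x_t=\int_0^t\bigl(b(X_s^\varepsilon,\theta_0)-b(x_s,\theta_0)\bigr)\,ds+\varepsilon B_t^H .
\]
Taking absolute values, bounding the integrand by $L\,|X_s^\varepsilon-x_s|$ via Assumption \ref{(A2)} and $|B_t^H|$ by $\sup_{0\le u\le T}|B_u^H|$, one gets
\[
|X_t^\varepsilon-x_t|\le \varepsilon\sup_{0\le u\le T}|B_u^H|+L\int_0^t|X_s^\varepsilon-x_s|\,ds ,
\]
and Gr\"onwall's lemma then produces the factor $e^{Lt}\le e^{LT}$.

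The second bound is obtained in the same manner directly from \eqref{SDE}: using $|b(x,\theta_0)|\le c(1+|x|)$ from Assumption \ref{(A1)} gives
\[
|X_t^\varepsilon|\le |x|+cT+\varepsilon\sup_{0\le u\le T}|B_u^H|+c\int_0^t|X_s^\varepsilon|\,ds ,
\]
and another application of Gr\"onwall, after replacing $c$ by a generic constant $C$ chosen so that $CT\ge |x|+cT$ and $C\ge c$, yields $|X_t^\varepsilon|\le e^{CT}\bigl(CT+\varepsilon\sup_{0\le t\le T}|B_t^H|\bigr)$. For the third bound, for $0\le s\le t\le T$ I would write $X_t^\varepsilon-X_s^\varepsilon=\int_s^t b(X_u^\varepsilon,\theta_0)\,du+\varepsilon(B_t^H-B_s^H)$, bound the drift by $c(1+|X_u^\varepsilon|)\le c\bigl(1+\sup_{0\le u\le T}|X_u^\varepsilon|\bigr)$, and integrate over $[s,t]$ to get the claimed linear-in-$|t-s|$ estimate.

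I do not expect a genuine obstacle here; the lemma is essentially a bookkeeping exercise. The only points that need a little care are keeping track that $C$ is a generic constant depending on $b$ (through $c$), $|x|$ and $T$ but not on $\varepsilon$ or $\omega$, checking that the suprema appearing on the right-hand sides are a.s.\ finite by path continuity, and verifying that the Gr\"onwall step is legitimate because $t\mapsto X_t^\varepsilon$ is continuous and hence bounded on $[0,T]$. If one insists on reproducing the stated constants verbatim, the fussiest piece is the absorption of $|x|$ and $cT$ into $CT$ in the second estimate.
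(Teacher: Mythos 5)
Your proof is correct and follows essentially the same route as the paper: subtract \eqref{ODE} from \eqref{SDE}, use the Lipschitz condition and Gr\"onwall for the first estimate, and the linear growth condition (plus Gr\"onwall) for the other two, which the paper merely asserts without detail. Your write-up simply fills in the bookkeeping the paper omits.
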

 \begin{proof}
     By \eqref{SDE} and \eqref{ODE}
     \begin{equation*}
       \begin{aligned}
                |X_t^\varepsilon-x_t|&\le\int_0^t\left|b(X_s^\varepsilon,\theta_0)-b(x_s,\theta_0)\right|+\varepsilon|B_t^H|\\
                &\le L\int_0^t|X_s^\varepsilon-x_s|ds+\varepsilon\sup_{0\le t\le T}|B_t^H|.
       \end{aligned}
     \end{equation*}
     By Gronwall's inequality, it follows that
     \begin{equation*}
       |X_t^\varepsilon-x_t|\le \varepsilon e^{Lt}\sup_{0\le t\le T}|B_t^H|,
     \end{equation*}
     and the first estimate follows. Other estimates hold true by the linear growth condition of the functon $b$.
 \end{proof}
To show the asymptotic property of estimator $\hat{\theta}_\varepsilon$, we apply the polynomial type large deviation inequality investigated by Yoshida (2011). Let $\mathbb{U}_\varepsilon(\theta_0):=\left\{u\in\mathbb{R}^d:\theta_0+\varepsilon u\in\Theta\right\}$ and define the random field $\mathbb{Z}_{H,\varepsilon}:\mathbb{U}_\varepsilon(\theta_0)\rightarrow\mathbb{R}_+$ by
\begin{equation*}
  \mathbb{Z}_{H,\varepsilon}(u)=\exp\left\{\mathbb{L}_{H,\varepsilon}(\theta_0+\varepsilon u)-\mathbb{L}_{H,\varepsilon}(\theta_0)\right\},~~~u\in \mathbb{U}_\varepsilon(\theta_0).
\end{equation*}
Applying Taylor's formula, we have
\begin{equation*}
  \log\mathbb{Z}_{H,\varepsilon}(u)=\varepsilon\nabla_\theta\mathbb{L}_{H,\varepsilon}(\theta_0)[u]-\frac{1}{2}u\Gamma_H(\theta_0)u^\top+R_\varepsilon(u),
\end{equation*}
where
\begin{equation*}
  \begin{aligned}
    R_\varepsilon(u)&=\frac{1}{2}u\left(\varepsilon^2\nabla_\theta^2\mathbb{L}_{H,\varepsilon}(\theta_0)-(-\Gamma_H(\theta_0))\right)u^\top\\
    &~~~~~~+\frac{1}{2}\varepsilon^3\int_0^1(1-s)^2\nabla_\theta^3\mathbb{L}_{H,\varepsilon}(\theta_0+s\varepsilon u)[u,u,u]ds.
  \end{aligned}
\end{equation*}
and $\nabla_\theta^3\mathbb{L}_{H,\varepsilon}(\theta)[u,v,w]=\sum_{i,j,k}\partial_i\partial_j\partial_k\mathbb{L}_{H,\varepsilon}(\theta)u_iv_jw_k$.
\begin{remark}
  \label{dif}
  The log--likelihood function $\mathbb{L}_{H,\varepsilon}$ is differentiable in $\theta$ under Assumption \ref{(A1)}, and we have
  \begin{equation*}
    \begin{aligned}
      \nabla_\theta\mathbb{L}_{H,\varepsilon}(\theta)&=\int_0^T\nabla_\theta Q_{H,\theta}^\varepsilon(t)dZ_t-\int_0^TQ_{H,\theta}^\varepsilon(t)\nabla_\theta Q_{H,\theta}^\varepsilon(t)dt,\\
      \nabla_\theta^2\mathbb{L}_{H,\varepsilon}(\theta_0)&=\int_0^T\nabla_\theta^2 Q_{H,\theta}^\varepsilon(t)dZ_t-\int_0^T\left(\nabla_\theta Q_{H,\theta}^\varepsilon(t)\right)^{\otimes2}dt-\int_0^TQ_{H,\theta}^\varepsilon(t)\nabla_\theta^2 Q_{H,\theta}^\varepsilon(t)dt.
    \end{aligned}
  \end{equation*}
\end{remark}
Throughout this paper, we will use the following notations:
\begin{notation}
For any $a,b\ge 0$, the symbol $a\lesssim b$ means that there exists a universal constant $C>0$ such that $a\le Cb$. When $C$ depends explicity on a specific quantity we shall indicate it explicity through the paper.
\end{notation}
The following lemma is one of the sufficient conditions for polynomial type large deviation inequality investigated by Yoshida [2011].
\begin{lemm}
  \label{e}
  For every $p\ge 2$,
  \begin{equation*}
    \sup_{0<\varepsilon<1}E\left[\left(\varepsilon^{-d_H}\left|\varepsilon^2\nabla_{\theta}^2\mathbb{L}_{H,\varepsilon}(\theta_0)-(-\Gamma_H(\theta_0))\right|\right)^p\right]<\infty,
  \end{equation*}
  where
  \begin{equation*}
    d_H=
    \begin{cases}
      1&{\rm if}~H<1/2\\
      1/2&{\rm if}~H>1/2.
    \end{cases}
  \end{equation*}
\end{lemm}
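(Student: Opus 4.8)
The plan is to first rewrite $\nabla_\theta^2\mathbb{L}_{H,\varepsilon}(\theta_0)$ in a form suited to moment estimates. Under $\theta_0$ the process $Z$ satisfies $dZ_t=Q_{H,\theta_0}^\varepsilon(t)\,dt+dW_t$, so substituting this into the expression for $\nabla_\theta^2\mathbb{L}_{H,\varepsilon}$ in Remark~\ref{dif} makes the term $\int_0^T\nabla_\theta^2Q_{H,\theta_0}^\varepsilon(t)\,Q_{H,\theta_0}^\varepsilon(t)\,dt$ cancel against $-\int_0^TQ_{H,\theta_0}^\varepsilon(t)\,\nabla_\theta^2Q_{H,\theta_0}^\varepsilon(t)\,dt$, leaving
\begin{equation*}
  \nabla_\theta^2\mathbb{L}_{H,\varepsilon}(\theta_0)=\int_0^T\nabla_\theta^2Q_{H,\theta_0}^\varepsilon(t)\,dW_t-\int_0^T\bigl(\nabla_\theta Q_{H,\theta_0}^\varepsilon(t)\bigr)^{\otimes2}\,dt .
\end{equation*}
Writing $q_{H,\theta}^\varepsilon:=\varepsilon Q_{H,\theta}^\varepsilon$, i.e.\ the Riemann--Liouville integral (for $H<1/2$) or Weyl derivative (for $H>1/2$) applied to $(\cdot)^{1/2-H}b(X_\cdot^\varepsilon,\theta)$ with the prefactor $\varepsilon^{-1}$ removed, and letting $q_{H,\theta}^0$ be the same object built from the deterministic curve $x$ of \eqref{ODE}, one unfolds the fractional operators to check that $\Gamma_H(\theta_0)=\int_0^T\bigl(\nabla_\theta q_{H,\theta_0}^0(t)\bigr)^{\otimes2}\,dt$. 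Hence
\begin{equation*}
  \varepsilon^2\nabla_\theta^2\mathbb{L}_{H,\varepsilon}(\theta_0)-\bigl(-\Gamma_H(\theta_0)\bigr)=\varepsilon\int_0^T\nabla_\theta^2q_{H,\theta_0}^\varepsilon(t)\,dW_t-\int_0^T\Bigl\{\bigl(\nabla_\theta q_{H,\theta_0}^\varepsilon(t)\bigr)^{\otimes2}-\bigl(\nabla_\theta q_{H,\theta_0}^0(t)\bigr)^{\otimes2}\Bigr\}dt=:I_\varepsilon-II_\varepsilon,
\end{equation*}
and it suffices to bound $E[|\varepsilon^{-d_H}I_\varepsilon|^p]$ and $E[|\varepsilon^{-d_H}II_\varepsilon|^p]$ uniformly in $\varepsilon\in(0,1)$.

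For $I_\varepsilon$ the integrand $t\mapsto\nabla_\theta^2q_{H,\theta_0}^\varepsilon(t)$ is adapted (it depends only on $\{X_s^\varepsilon:s\le t\}$), so $\int_0^T\nabla_\theta^2q_{H,\theta_0}^\varepsilon\,dW$ is an It\^o integral and the Burkholder--Davis--Gundy inequality gives $E[|I_\varepsilon|^p]\lesssim\varepsilon^p\,E\bigl[\bigl(\int_0^T|\nabla_\theta^2q_{H,\theta_0}^\varepsilon(t)|^2\,dt\bigr)^{p/2}\bigr]$. The polynomial growth of $\nabla_\theta^2b$ and $\nabla_\theta^2\partial_xb$ in Assumption~\ref{(A1)}, the bounds $|X_t^\varepsilon|\le e^{CT}(CT+\varepsilon\sup_{[0,T]}|B^H|)$ and $|X_t^\varepsilon-X_s^\varepsilon|\lesssim(1+\sup_{[0,T]}|X^\varepsilon|)|t-s|+\varepsilon|B_t^H-B_s^H|$ of Lemma~\ref{s}, the $L^p$- and H\"older-boundedness of the fractional operators, and the finiteness of all moments of $\sup_{[0,T]}|B_t^H|$ and of the relevant H\"older seminorm of $B^H$ together give $\sup_{0<\varepsilon<1}E\bigl[\bigl(\int_0^T|\nabla_\theta^2q_{H,\theta_0}^\varepsilon|^2\,dt\bigr)^{p/2}\bigr]<\infty$; since $d_H\le1$ this yields $E[|\varepsilon^{-d_H}I_\varepsilon|^p]\lesssim\varepsilon^{p(1-d_H)}\le1$.

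For $II_\varepsilon$ I use $a^{\otimes2}-b^{\otimes2}=(a-b)\otimes a+b\otimes(a-b)$ with $a=\nabla_\theta q_{H,\theta_0}^\varepsilon(t)$, $b=\nabla_\theta q_{H,\theta_0}^0(t)$; the factors $a,b$ have uniformly bounded moments by the previous paragraph, while $a-b$ is the fractional operator applied to $(\cdot)^{1/2-H}\bigl(\nabla_\theta b(X_\cdot^\varepsilon,\theta_0)-\nabla_\theta b(x_\cdot,\theta_0)\bigr)$. By the mean value theorem and Assumption~\ref{(A1)}, $|\nabla_\theta b(X_t^\varepsilon,\theta_0)-\nabla_\theta b(x_t,\theta_0)|\lesssim(1+\sup_{[0,T]}|X^\varepsilon|+\sup_{[0,T]}|x|)^N|X_t^\varepsilon-x_t|$, and $|X_t^\varepsilon-x_t|\le\varepsilon e^{LT}\sup_{[0,T]}|B^H|$ by Lemma~\ref{s}; for $H>1/2$ one needs in addition the increment bound $|(X_t^\varepsilon-x_t)-(X_s^\varepsilon-x_s)|\lesssim\varepsilon(|t-s|+|B_t^H-B_s^H|)$, read off from \eqref{SDE}--\eqref{ODE}, to handle the singular integral in $D_{0+}^{H-1/2}$. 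Estimating $\|a-b\|_{L^2}$ via the boundedness of the fractional operators and Cauchy--Schwarz then gives $E[|II_\varepsilon|^p]\lesssim\varepsilon^{p}$ for $H<1/2$ and $E[|II_\varepsilon|^p]\lesssim\varepsilon^{pd_H}$ for $H>1/2$, so $E[|\varepsilon^{-d_H}II_\varepsilon|^p]\lesssim1$ in both cases. Adding this to the bound for $I_\varepsilon$ and taking $\sup_{0<\varepsilon<1}$ proves the lemma.

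The step I expect to be the main obstacle is controlling $a-b$ (and $\int_0^T|\nabla_\theta^2q_{H,\theta_0}^\varepsilon|^2\,dt$) in the regime $H>1/2$: there $D_{0+}^{H-1/2}$ is a singular integral with a non-integrable diagonal kernel, so the sup-norm comparison of Lemma~\ref{s} no longer suffices and one must keep track of H\"older increments of $X^\varepsilon-x$ together with the weighted singularities at $t=0$ and along $s=t$ coming from $(\cdot)^{1/2-H}$ and $(t-s)^{-H-1/2}$, all uniformly in $\varepsilon$; this loss is exactly why the normalizing rate drops to $\varepsilon^{-1/2}$ in that case, whereas for $H<1/2$ the integrable Riemann--Liouville kernel and the elementary estimates of Lemma~\ref{s} give the rate $\varepsilon^{-1}$.
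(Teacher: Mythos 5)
Your overall strategy coincides with the paper's: the same cancellation $dZ_t=Q^\varepsilon_{H,\theta_0}(t)\,dt+dW_t$ reducing $\nabla_\theta^2\mathbb{L}_{H,\varepsilon}(\theta_0)$ to a martingale part plus $-\int_0^T(\nabla_\theta Q^\varepsilon_{H,\theta_0})^{\otimes2}dt$, the same identification of $\Gamma_H(\theta_0)$ as the deterministic counterpart, BDG/Minkowski plus Lemma \ref{s} for the stochastic integral, and the splitting $a^{\otimes2}-b^{\otimes2}=(a-b)\otimes a+b\otimes(a-b)$ for the drift part. For $H<1/2$ your argument is the paper's argument and is fine.

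For $H>1/2$, however, there is a genuine gap at exactly the step you flag. The quantity you must put into the singular kernel $(t-s)^{-H-1/2}$ is the \emph{composed} second-order difference $D_{s,t}:=\nabla_\theta b(X_t^\varepsilon,\theta_0)-\nabla_\theta b(X_s^\varepsilon,\theta_0)-\nabla_\theta b(x_t,\theta_0)+\nabla_\theta b(x_s,\theta_0)$, and your proposed tool, the increment bound $|(X_t^\varepsilon-x_t)-(X_s^\varepsilon-x_s)|\lesssim\varepsilon(|t-s|+|B_t^H-B_s^H|)$, does not transfer to $D_{s,t}$ under Assumption \ref{(A1)}. Writing $G_u:=\int_0^1\partial_x\nabla_\theta b\bigl(x_u+r(X_u^\varepsilon-x_u),\theta_0\bigr)dr$, one has $D_{s,t}=G_t\bigl[(X_t^\varepsilon-x_t)-(X_s^\varepsilon-x_s)\bigr]+(G_t-G_s)(X_s^\varepsilon-x_s)$; the first term is indeed $O(\varepsilon|t-s|^{H-\delta})$, but the cross term only gives $O(\varepsilon)$ with \emph{no} power of $|t-s|$, because $b$ is only $C^{1,4}$ with growth bounds on $\nabla_\theta^i\partial_x b$ — no H\"older or Lipschitz continuity of $\partial_x\nabla_\theta b$ in $x$ is assumed — and $\int_0^t(t-s)^{-H-1/2}s^{1/2-H}ds$ diverges. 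The paper's missing ingredient here is an interpolation (geometric-mean) bound: combining the two estimates $\|D_{s,t}\|_{L^p}\lesssim|t-s|^{H}$ (time regularity of $X^\varepsilon$ and $x$, no $\varepsilon$) and $\|D_{s,t}\|_{L^p}\lesssim\varepsilon$ (closeness, no time regularity) yields $\|D_{s,t}\|_{L^p}\lesssim\varepsilon^{1/2}|t-s|^{H/2}$, which is integrable against the kernel since $H/2>H-1/2$, and which is precisely the origin of the weakened rate $d_H=1/2$. Note that your own sketch betrays the gap: if your mechanism worked as stated it would produce the rate $\varepsilon^{p}$ for $H>1/2$ as well, yet you assert $\varepsilon^{pd_H}=\varepsilon^{p/2}$ without deriving where the square root comes from; inserting the interpolation step (or adding a H\"older assumption on $\partial_x\nabla_\theta b$) is what closes the argument.
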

\begin{proof}
  By Remark \ref{dif}, it folows that
  \begin{equation*}
    \begin{aligned}
      \varepsilon^2\nabla_{\theta}^2\mathbb{L}_{H,\varepsilon}(\theta_0)-(-\Gamma_H(\theta_0))=\varepsilon^2\int_0^T\nabla_\theta^2 Q_{H,\theta_0}^\varepsilon(t)dW_t-\left(\varepsilon^2\int_0^T\left(\nabla_\theta Q_{H,\theta_0}^\varepsilon(t)\right)^{\otimes2}dt-\Gamma_H(\theta_0)\right).
    \end{aligned}
  \end{equation*}
  At first, we consider the case of $H<1/2$. Note that
  \begin{equation*}
    Q_{H,\theta_0}^\varepsilon(t)=c_1^2\varepsilon^{-1}t^{H-1/2}\int_0^ts^{1/2-H}(t-s)^{-1/2-H}b(X_s^\varepsilon,\theta_0)ds.
  \end{equation*}
  By Burkholder's and Minkowski's inequalities and Lemma \ref{s}, the stochastic integral part is estimated as
  \begin{equation*}
    \begin{aligned}
      &E\left(\varepsilon^2\left|\int_0^T\partial_{\theta_i}\partial_{\theta_j} Q_{H,\theta_0}^\varepsilon(t)dW_t\right|\right)^p\lesssim \left(\varepsilon^4\int_0^T\left \|\partial_{\theta_i}\partial_{\theta_j} Q_{H,\theta_0}^\varepsilon(t)\right\|_{L^p(\Omega)}^2dt\right)^{p/2}\\
      &\lesssim\varepsilon^p\left(\int_0^Tt^{2H-1}\left|\int_0^ts^{1/2-H}(t-s)^{-1/2-H}\left\|\partial_{\theta_i}\partial_{\theta_j}b(X_s^\varepsilon,\theta_0)\right\|_{L^p(\Omega)}ds\right|^2dt\right)^{p/2}\\
      &\lesssim\varepsilon^p\sup_{0\le s\le T}\left\|1+|X_s^\varepsilon|^N\right\|^p_{L^p(\Omega)}\left(\int_0^Tt^{1-2H}dt\right)^{p/2}\lesssim\varepsilon^p,
    \end{aligned}
  \end{equation*}
  for every $i,j=1,\cdots,d$. We shall estimate the second part. For every $i,j=1,\cdots,d$,
  \begin{equation*}
    \begin{aligned}
      &\varepsilon^2\int_0^T\partial_{\theta_i} Q_{H,\theta_0}^\varepsilon(t)\partial_{\theta_j} Q_{H,\theta_0}^\varepsilon(t)dt-\Gamma_H^{i,j}(\theta_0)\\
      &=c_1^2\int_0^Tt^{2H-1}\Biggl\{\left(\int_0^t s^{1/2-H}(t-s)^{-1/2-H}\partial_{\theta_i} b(X_s^\varepsilon,\theta_0)ds\right)\left(\int_0^t s^{1/2-H}(t-s)^{-1/2-H}\partial_{\theta_j} b(X_s^\varepsilon,\theta_0)ds\right)\\
      &~~~~~~~~~~~~~~~~~~~-\left(\int_0^t s^{1/2-H}(t-s)^{-1/2-H}\partial_{\theta_i} b(x_s,\theta_0)ds\right)\left(\int_0^t s^{1/2-H}(t-s)^{-1/2-H}\partial_{\theta_j} b(x_s,\theta_0)ds\right)\Biggl\}dt\\
      &=c_1^2\int_0^Tt^{2H-1}\Biggl\{\left(\int_0^t s^{1/2-H}(t-s)^{-1/2-H}\partial_{\theta_i}\left(b(X_s^\varepsilon,\theta_0)-b(x_s,\theta_0)\right)ds\right)\\
      &\hspace{100pt}\times\left(\int_0^t s^{1/2-H}(t-s)^{-1/2-H}\partial_{\theta_j}b(X_s^\varepsilon,\theta_0)ds\right)\\
      &\hspace{75pt}-\left(\int_0^t s^{1/2-H}(t-s)^{-1/2-H}\partial_{\theta_i} b(x_s,\theta_0)ds\right)\\
      &\hspace{100pt}\times\left(\int_0^t s^{1/2-H}(t-s)^{-1/2-H}\partial_{\theta_j}\left( b(x_s,\theta_0)-b(X_s^\varepsilon,\theta_0)\right)ds\right)\Biggl\}dt.
    \end{aligned}
  \end{equation*}
  By H\"older's ineq., Minkowski's ineq. and Lemma \ref{dif}, we can show that
  \begin{equation*}
    \begin{aligned}
      &E\Biggl|\int_0^Tt^{2H-1}\left(\int_0^t s^{1/2-H}(t-s)^{-1/2-H}\partial_{\theta_i}\left(b(X_s^\varepsilon,\theta_0)-b(x_s,\theta_0)\right)ds\right)\\
      &\hspace{50pt}\times\left(\int_0^t s^{1/2-H}(t-s)^{-1/2-H}\partial_{\theta_j}b(X_s^\varepsilon,\theta_0)ds\right)dt\Biggl|^p\\
      &\le\Biggl|\int_0^Tt^{2H-1}\left(\int_0^t s^{1/2-H}(t-s)^{-1/2-H}\left\|\partial_{\theta_i}\left(b(X_s^\varepsilon,\theta_0)-b(x_s,\theta_0)\right)\right\|_{L^{2p}(\Omega)}ds\right)\\
      &\hspace{50pt}\times\left(\int_0^t s^{1/2-H}(t-s)^{-1/2-H}\left\|\partial_{\theta_j}b(X_s^\varepsilon,\theta_0)\right\|_{L^{2p}(\Omega)}ds\right)dt\Biggl|^p\\
      &\lesssim\sup_{0\le s\le T}\left\|\left(1+|X_s^\varepsilon|^N+|x_s|^N\right)\left|X_s^\varepsilon-x_s\right|\right\|_{L^{2p}(\Omega)}^p\left(\int_0^Tt^{1-2H}dt\right)
      \lesssim\varepsilon^p.
    \end{aligned}
  \end{equation*}
Therefore
\begin{equation*}
  \sup_{0<\varepsilon<1}E\left[\left(\varepsilon^{-1}\left|\varepsilon^2\nabla_{\theta}^2\mathbb{L}_{H,\varepsilon}(\theta_0)-(-\Gamma_H(\theta_0))\right|\right)^p\right]<\infty.
\end{equation*}
The case where $H>1/2$. We have that
\begin{equation*}
  \begin{aligned}
    Q_{H,\theta_0}^\varepsilon(t)=c_1\varepsilon^{-1}t^{1/2-H}b(X_t^\varepsilon,\theta_0)+c_2\varepsilon^{-1}t^{H-1/2}\int_0^t\frac{b(X_t^\varepsilon,\theta_0)-b(X_s^\varepsilon,\theta_0)}{(t-s)^{H+1/2}}s^{1/2-H}ds.
  \end{aligned}
\end{equation*}
In a similar way with the case $H<1/2$, the stochastic integral part is evaluated as
\begin{equation*}
  \begin{aligned}
E\left(\varepsilon^2\left|\int_0^T\partial_{\theta_i}\partial_{\theta_j} Q_{H,\theta_0}^\varepsilon(t)dW_t\right|\right)^p&\lesssim \left(\varepsilon^4\int_0^T\left \|\partial_{\theta_i}\partial_{\theta_j} Q_{H,\theta_0}^\varepsilon(t)\right\|_{L^p(\Omega)}^2dt\right)^{p/2}\\
&\lesssim\varepsilon^p\Biggl\{\int_0^Tt^{1-2H}\|\partial_{\theta_i}\partial_{\theta_j}b(X_t^\varepsilon,\theta_0)\|_{L^p(\Omega)}^2dt\\
&\hspace{10pt}+\int_0^Tt^{2H-1}\left\|\int_0^t\frac{\partial_{\theta_i}\partial_{\theta_j}b(X_t^\varepsilon,\theta_0)-\partial_{\theta_i}\partial_{\theta_j}b(X_s^\varepsilon,\theta_0)}{(t-s)^{H+1/2}}s^{1/2-H}ds\right\|_{L^p(\Omega)}^2dt\Biggl\}^{p/2}\\
&\lesssim\varepsilon^p\Biggl\{\left\|1+\sup_{0\le t\le T}E|X_t^\varepsilon|^{2N}\right\|_{L^p(\Omega)}^2\int_0^Tt^{1-2H}dt\\
&\hspace{10pt}+\int_0^Tt^{2H-1}\left\|\int_0^t\frac{\left(1+|X_t^\varepsilon|^N+|X_s^\varepsilon|^N\right)|X_t^\varepsilon-X_s^\varepsilon|}{(t-s)^{H+1/2}}s^{1/2-H}ds\right\|_{L^p(\Omega)}^2dt\Biggl\}^{p/2}\\
&\lesssim\varepsilon^p\left\{1+\int_0^Tt^{2H-1}\left(\int_0^t\frac{\left\|X_t^\varepsilon-X_s^\varepsilon\right\|_{L^{2p}(\Omega)}}{(t-s)^{H+1/2}}s^{1/2-H}ds\right)^2dt\right\}^{p/2}\lesssim\varepsilon^p,
  \end{aligned}
\end{equation*}
for every $i,j=1,\cdots,d$. We estimate the term $\varepsilon^2\int_0^T\left(\nabla_\theta Q_{H,\theta_0}^\varepsilon(t)\right)^{\otimes2}dt-\Gamma_H(\theta_0)$. For every $i,j=1,\cdots,d$
\begin{equation*}
  \begin{aligned}
    &\varepsilon^2\int_0^T\partial_{\theta_i} Q_{H,\theta_0}^\varepsilon(t)\partial_{\theta_j} Q_{H,\theta_0}^\varepsilon(t)dt-\Gamma_H^{i,j}(\theta_0)\\
    &=c_2\left(\int_0^Tt^{1-2H}\left\{\partial_{\theta_i} b(X_t^\varepsilon,\theta_0)\partial_{\theta_j} b(X_t^\varepsilon,\theta_0)-\partial_{\theta_i} b(x_t,\theta_0)\partial_{\theta_j} b(x_t,\theta_0)\right\}dt\right)\\
    &+(c_2c_3)^{1/2}\Biggl(\int_0^T\partial_{\theta_i}b(X_t^\varepsilon,\theta_0)\int_0^t\frac{\partial_{\theta_j}b(X_t^\varepsilon,\theta_0)-\partial_{\theta_j}b(X_s^\varepsilon,\theta_0)}{(t-s)^{H+1/2}}s^{1/2-H}dsdt\\
    &\hspace{50pt}-\int_0^T\partial_{\theta_i}b(x_t,\theta_0)\int_0^t\frac{\partial_{\theta_j}b(x_t,\theta_0)-\partial_{\theta_j}b(x_s,\theta_0)}{(t-s)^{H+1/2}}s^{1/2-H}dsdt\Biggl)\\
    &+c_3\Biggl(\int_0^T\left(\int_0^t\frac{\partial_{\theta_i}b(X_t^\varepsilon,\theta_0)-\partial_{\theta_i}b(X_s^\varepsilon,\theta_0)}{(t-s)^{H+1/2}}s^{1/2-H}ds\right)\left(\int_0^t\frac{\partial_{\theta_j}b(X_t^\varepsilon,\theta_0)-\partial_{\theta_j}b(X_s^\varepsilon,\theta_0)}{(t-s)^{H+1/2}}s^{1/2-H}ds\right)dt\\
    &\hspace{30pt}-\int_0^T\left(\int_0^t\frac{\partial_{\theta_i}b(x_t,\theta_0)-\partial_{\theta_i}b(x_s,\theta_0)}{(t-s)^{H+1/2}}s^{1/2-H}ds\right)\left(\int_0^t\frac{\partial_{\theta_j}b(x_t,\theta_0)-\partial_{\theta_j}b(x_s,\theta_0)}{(t-s)^{H+1/2}}s^{1/2-H}ds\right)dt\Biggl).
  \end{aligned}
\end{equation*}
Using Lemma \ref{s}, we have
\begin{equation*}
  \begin{aligned}
    &E\left|\int_0^Tt^{1-2H}\left\{\partial_{\theta_i} b(X_t^\varepsilon,\theta_0)\partial_{\theta_j} b(X_t^\varepsilon,\theta_0)-\partial_{\theta_i} b(x_t,\theta_0)\partial_{\theta_j} b(x_t,\theta_0)\right\}dt\right|^p\\
    &\lesssim E\left|\int_0^Tt^{1-2H}\left\{\partial_{\theta_i} b(X_t^\varepsilon,\theta_0)\left(\partial_{\theta_j} b(X_t^\varepsilon,\theta_0)-\partial_{\theta_j} b(x_t,\theta_0)\right)+\partial_{\theta_j} b(x_t,\theta_0)\left(\partial_{\theta_i} b(X_t^\varepsilon,\theta_0)-\partial_{\theta_i} b(x_t,\theta_0)\right)\right\}\right|^p\\
    &\lesssim E\left|\int_0^Tt^{1-2H}(1+|X_t^\varepsilon|^N+|x_t|^N)^2\left|X_t^\varepsilon-x_t\right|\right|^p\lesssim\varepsilon^p.
  \end{aligned}
\end{equation*}
We shall estimate the second term. Note that
\begin{equation*}
  \begin{aligned}
    &\left\|\partial_{\theta_j}b(X_t^\varepsilon,\theta_0)-\partial_{\theta_j}b(X_s^\varepsilon,\theta_0)-\partial_{\theta_j}b(x_t,\theta_0)+\partial_{\theta_j}b(x_s,\theta_0)\right\|_{L^p(\Omega)}\\
    &\lesssim\left(\left\|\partial_{\theta_j}b(X_t^\varepsilon,\theta_0)-\partial_{\theta_j}b(X_s^\varepsilon,\theta_0)\right\|_{L^p(\Omega)}+\left\|\partial_{\theta_j}b(x_t,\theta_0)-\partial_{\theta_j}b(x_s,\theta_0)\right\|_{L^p(\Omega)}\right)^{1/2}\\
    &\times\left(\left\|\partial_{\theta_j}b(X_t^\varepsilon,\theta_0)-\partial_{\theta_j}b(x_s,\theta_0)\right\|_{L^p(\Omega)}+\left\|\partial_{\theta_j}b(X^\varepsilon_s,\theta_0)+\partial_{\theta_j}b(x_s,\theta_0)\right\|_{L^p(\Omega)}\right)^{1/2}\\
    &\lesssim\varepsilon^{1/2}|t-s|^{H/2}.
  \end{aligned}
\end{equation*}
 Thus we obtain that
\begin{equation*}
  \begin{aligned}
    &E\Biggl|\int_0^T\Biggl(\partial_{\theta_i}b(X_t^\varepsilon,\theta_0)\int_0^t\frac{\partial_{\theta_j}b(X_t^\varepsilon,\theta_0)-\partial_{\theta_j}b(X_s^\varepsilon,\theta_0)}{(t-s)^{H+1/2}}s^{1/2-H}ds\\
    &\hspace{50pt}-\partial_{\theta_i}b(x_t,\theta_0)\int_0^t\frac{\partial_{\theta_j}b(x_t,\theta_0)-\partial_{\theta_j}b(x_s,\theta_0)}{(t-s)^{H+1/2}}s^{1/2-H}ds\Biggl)dt\Biggl|^p\\
    &=E\Biggl|\int_0^T\Biggl(\left(\partial_{\theta_i}b(X_t^\varepsilon,\theta_0)-\partial_{\theta_i}b(x_t,\theta_0)\right)\int_0^t\frac{\partial_{\theta_j}b(X_t^\varepsilon,\theta_0)-\partial_{\theta_j}b(X_s^\varepsilon,\theta_0)}{(t-s)^{H+1/2}}s^{1/2-H}ds\\
    &\hspace{50pt}+\partial_{\theta_i}b(x_t,\theta_0)\int_0^t\frac{\partial_{\theta_j}b(X_t^\varepsilon,\theta_0)-\partial_{\theta_j}b(X_s^\varepsilon,\theta_0)-\partial_{\theta_j}b(x_t,\theta_0)+\partial_{\theta_j}b(x_s,\theta_0)}{(t-s)^{H+1/2}}s^{1/2-H}ds\Biggl)dt\Biggl|^p\\
    &\lesssim\left(\int_0^T\left\|(1+|X_t^\varepsilon|^N+|x_t|^N)|X_t^\varepsilon-x_t|\right\|_{L^{2p}(\Omega)}\int_0^t\frac{\left\|\left(1+|X_t^\varepsilon|^N+|X_s^\varepsilon|^N\right)|X_t^\varepsilon-X_s^\varepsilon|\right\|_{L^{2p}(\Omega)}}{(t-s)^{H+1/2}}s^{1/2-H}ds\right)^p\\
    &+\left(\int_0^T(1+|x_t|^N)\int_0^t\frac{\left\|\partial_{\theta_j}b(X_t^\varepsilon,\theta_0)-\partial_{\theta_j}b(X_s^\varepsilon,\theta_0)-\partial_{\theta_j}b(x_t,\theta_0)+\partial_{\theta_j}b(x_s,\theta_0)\right\|_{L^p(\Omega)}}{(t-s)^{H+1/2}}s^{1/2-H}dsdt\right)^p\\
    &\lesssim\varepsilon^p+\varepsilon^{p/2}.
  \end{aligned}
\end{equation*}
We can estimate the third term in a similar way the second term and we complete the proof.
\end{proof}
\begin{lemm}
  \label{3}
    For every $p\ge 2$,
  \begin{equation*}
    \sup_{0<\varepsilon<1}E\left[\sup_{\theta\in\Theta}\left|\varepsilon^2\nabla_{\theta}^3\mathbb{L}_{H,\varepsilon}(\theta)\right|^p\right]<\infty.
  \end{equation*}
\end{lemm}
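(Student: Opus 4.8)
The plan is to differentiate the formulas of Remark~\ref{dif} in $\theta$ one further time, observe that each $\theta$--derivative of $Q_{H,\theta}^\varepsilon$ carries a factor $\varepsilon^{-1}$ so that $\varepsilon^2\nabla_\theta^3\mathbb{L}_{H,\varepsilon}(\theta)$ is $O(1)$, reduce the supremum over $\theta$ to a pointwise moment estimate via the Sobolev embedding $W^{1,p}(\Theta)\hookrightarrow C(\bar\Theta)$, and finally bound each resulting term by exactly the same computations as in Lemma~\ref{e}.

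First, using $dZ_t=Q_{H,\theta_0}^\varepsilon(t)\,dt+dW_t$ and the Leibniz rule applied to $\tfrac12\int_0^T(Q_{H,\theta}^\varepsilon)^2\,dt$, one gets
\begin{equation*}
  \begin{aligned}
    \varepsilon^2\nabla_\theta^3\mathbb{L}_{H,\varepsilon}(\theta)
    &=\varepsilon^2\int_0^T\nabla_\theta^3 Q_{H,\theta}^\varepsilon(t)\,dW_t
      +\varepsilon^2\int_0^T\nabla_\theta^3 Q_{H,\theta}^\varepsilon(t)\,Q_{H,\theta_0}^\varepsilon(t)\,dt\\
    &\quad-3\,\varepsilon^2\int_0^T\nabla_\theta^2 Q_{H,\theta}^\varepsilon(t)\,\nabla_\theta Q_{H,\theta}^\varepsilon(t)\,dt
      -\varepsilon^2\int_0^T Q_{H,\theta}^\varepsilon(t)\,\nabla_\theta^3 Q_{H,\theta}^\varepsilon(t)\,dt,
  \end{aligned}
\end{equation*}
where the factor $3$ abbreviates the three symmetric Leibniz terms of the shape $\nabla_\theta^2 Q\cdot\nabla_\theta Q$. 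Since $\theta$ enters $Q_{H,\theta}^\varepsilon$ only through $b(X_\cdot^\varepsilon,\theta)$ and differentiation in $\theta$ commutes with the fractional operators $I_{0+}^{1/2-H}$, $D_{0+}^{H-1/2}$ acting on the time variable, we have $\nabla_\theta^k Q_{H,\theta}^\varepsilon(t)=\varepsilon^{-1}\times(\text{the same kernel applied to }\nabla_\theta^k b(X_\cdot^\varepsilon,\theta))$; in particular every $\nabla_\theta^k Q_{H,\theta}^\varepsilon$ is proportional to $\varepsilon^{-1}$, so the stochastic integral term is $O(\varepsilon)$ and the remaining three terms are $O(1)$ --- which is exactly what the prefactor $\varepsilon^2$ is for.

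To handle $\sup_{\theta\in\Theta}$, we invoke $W^{1,p}(\Theta)\hookrightarrow C(\bar\Theta)$ (valid for $p>d$), which gives
\begin{equation*}
  \sup_{\theta\in\Theta}\bigl|\varepsilon^2\nabla_\theta^3\mathbb{L}_{H,\varepsilon}(\theta)\bigr|^p
  \lesssim\int_\Theta\Bigl(\bigl|\varepsilon^2\nabla_\theta^3\mathbb{L}_{H,\varepsilon}(\theta)\bigr|^p+\bigl|\varepsilon^2\nabla_\theta^4\mathbb{L}_{H,\varepsilon}(\theta)\bigr|^p\Bigr)\,d\theta ,
\end{equation*}
so by Tonelli's theorem it suffices to prove $\sup_{0<\varepsilon<1}\sup_{\theta\in\Theta}E\bigl|\varepsilon^2\nabla_\theta^k\mathbb{L}_{H,\varepsilon}(\theta)\bigr|^p<\infty$ for $k=3,4$. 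Passing to one more derivative is harmless since $b\in C^{1,4}(\mathbb{R}\times\Theta;\mathbb{R})$, so $\nabla_\theta^4 b$ and $\nabla_\theta^4\partial_x b$ exist and obey the polynomial growth bounds of Assumption~\ref{(A1)}. Fixing $\theta$, we then estimate the four terms exactly as in Lemma~\ref{e}: for the stochastic integral we use Burkholder's and Minkowski's inequalities to bound its $L^p(\Omega)$ norm by $\varepsilon^2\bigl(\int_0^T\|\nabla_\theta^k Q_{H,\theta}^\varepsilon(t)\|_{L^p(\Omega)}^2\,dt\bigr)^{1/2}$, while for the other three terms we use Minkowski's inequality together with Cauchy--Schwarz in $\Omega$ to split the products of $Q$'s; the $L^q(\Omega)$ norms of $\nabla_\theta^k b(X_t^\varepsilon,\theta)$ and (when $H>1/2$) of the increments $\nabla_\theta^k b(X_t^\varepsilon,\theta)-\nabla_\theta^k b(X_s^\varepsilon,\theta)$ are controlled by Assumption~\ref{(A1)}, the mean value theorem, and the moment and Hölder estimates of Lemma~\ref{s} (namely $\sup_{t\le T}\|X_t^\varepsilon\|_{L^q(\Omega)}\lesssim1$ and $\|X_t^\varepsilon-X_s^\varepsilon\|_{L^q(\Omega)}\lesssim|t-s|^H$, uniformly in $\varepsilon\in(0,1]$). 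All the kernels $t^{\pm(H-1/2)}$, $(t-s)^{-1/2-H}$, $s^{1/2-H}$ are integrable against these bounds, the powers of $\varepsilon$ cancel, and one is left with bounds $\lesssim\varepsilon$ for the stochastic term and $\lesssim1$ for the rest, uniformly in $\varepsilon$ and $\theta$; this gives the claim.

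The main obstacle is, as in Lemma~\ref{e}, the case $H>1/2$, where one must bound uniformly in $\varepsilon$ the singular integrals $\int_0^t\frac{\nabla_\theta^k b(X_t^\varepsilon,\theta)-\nabla_\theta^k b(X_s^\varepsilon,\theta)}{(t-s)^{H+1/2}}\,s^{1/2-H}\,ds$ in $L^p(\Omega)$; this relies on the Hölder continuity of $X^\varepsilon$ with exponent $H$ in $L^q(\Omega)$ and on the integrability of $(t-s)^{-1/2}s^{1/2-H}$ near both endpoints, and the bookkeeping is the same as --- though a little longer than --- the one already carried out in Lemma~\ref{e}. The only extra point requiring attention is that $\varepsilon^2\nabla_\theta^4\mathbb{L}_{H,\varepsilon}$ must admit the analogous bound, which is precisely where the full $C^{1,4}$ regularity in Assumption~\ref{(A1)} is used.
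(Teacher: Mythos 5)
Your argument is essentially the paper's own proof: the Sobolev embedding $W^{1,p}(\Theta)\hookrightarrow C(\bar\Theta)$ reduces the supremum over $\theta$ to moment bounds on $\varepsilon^2\nabla_\theta^k\mathbb{L}_{H,\varepsilon}(\theta)$ for $k=3,4$ (which is where the $C^{1,4}$ regularity of Assumption~\ref{(A1)} enters), and these are then handled by the same Burkholder--Minkowski estimates as in Lemma~\ref{e}, using the bounds of Lemma~\ref{s}. The extra detail you supply (the explicit third-derivative decomposition and the $\varepsilon^{-1}$ scaling of $\nabla_\theta^kQ_{H,\theta}^\varepsilon$) is consistent with, and fills in, what the paper leaves implicit.
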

\begin{proof}
  By Sobolev's inequality, for every $p>d$
  \begin{equation*}
    \sup_{\theta\in\Theta}\left|\nabla_{\theta}^3\mathbb{L}_{H,\varepsilon}(\theta_0)\right|^p\lesssim\int_\Theta\left(\left|\nabla_{\theta}^3\mathbb{L}_{H,\varepsilon}(\theta)\right|^p+\left|\nabla_{\theta}^4\mathbb{L}_{H,\varepsilon}(\theta)\right|^p\right)d\theta.
  \end{equation*}
  By the same argument as in the proof of Lemma \ref{e}, we can show that
  \begin{equation*}
    \sup_{0<\varepsilon<1}\varepsilon^{2p}E\left[\int_\Theta\left(\left|\nabla_{\theta}^3\mathbb{L}_{H,\varepsilon}(\theta)\right|^p+\left|\nabla_{\theta}^4\mathbb{L}_{H,\varepsilon}(\theta)\right|^p\right)d\theta\right]<\infty.
  \end{equation*}
\end{proof}
We can check the following lemma holds true. The proof is similar to the one for Lemmas \ref{e} and \ref{3}.
\begin{lemm}
  \label{4}
  For every $p\ge 2$,
  \begin{equation*}
    \sup_{0<\varepsilon<1}E|\varepsilon\partial_\theta\mathbb{L}_{H,\varepsilon}(\theta_0)|^p<\infty,
  \end{equation*}
  and
  \begin{equation*}
    \sup_{0<\varepsilon<1}E\left(\sup_{\theta\in\Theta}\varepsilon\left|\mathbb{Y}_{H,\varepsilon}(\theta)-\mathbb{Y}_H(\theta)\right|\right)^p<\infty
  \end{equation*}
\end{lemm}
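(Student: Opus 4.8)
The plan is to reuse the two mechanisms behind Lemmas~\ref{e} and~\ref{3}. First, the identity $dZ_t=Q_{H,\theta_0}^\varepsilon(t)\,dt+dW_t$ makes the drift contributions cancel, so that both $\nabla_\theta\mathbb{L}_{H,\varepsilon}(\theta_0)$ and $\mathbb{L}_{H,\varepsilon}(\theta)-\mathbb{L}_{H,\varepsilon}(\theta_0)$ collapse to a quadratic functional of the rescaled coefficient $\varepsilon Q_{H,\theta}^\varepsilon$ plus a Wiener integral. Second, the Burkholder--Davis--Gundy and Minkowski integral inequalities, together with the polynomial growth bounds of Assumption~\ref{(A1)} and the uniform $L^p(\Omega)$ estimates on $X^\varepsilon$ and on its increments from Lemma~\ref{s}, make every resulting integrand integrable in $t$ uniformly in $\varepsilon\in(0,1]$. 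The passage to $\sup_{\theta\in\Theta}$ is, exactly as in Lemma~\ref{3}, via the Sobolev embedding $W^{1,p}(\Theta)\hookrightarrow C(\bar\Theta)$ with $p>d$: it suffices to bound the $p$-th moment of the quantity in question and of its $\theta$-gradient at a fixed $\theta$, uniformly in $\theta$.

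For the first bound, substituting $dZ_t=Q_{H,\theta_0}^\varepsilon(t)\,dt+dW_t$ in the formula for $\nabla_\theta\mathbb{L}_{H,\varepsilon}$ of Remark~\ref{dif}, the $dt$-part is cancelled exactly by $-\int_0^TQ_{H,\theta_0}^\varepsilon(t)\nabla_\theta Q_{H,\theta_0}^\varepsilon(t)\,dt$, leaving
\[
\varepsilon\,\nabla_\theta\mathbb{L}_{H,\varepsilon}(\theta_0)=\int_0^T\varepsilon\,\nabla_\theta Q_{H,\theta_0}^\varepsilon(t)\,dW_t .
\]
Since $\varepsilon Q_{H,\theta}^\varepsilon$ carries no explicit power of $\varepsilon$, BDG and Minkowski give $E\big|\varepsilon\nabla_\theta\mathbb{L}_{H,\varepsilon}(\theta_0)\big|^p\lesssim\big(\int_0^T\|\varepsilon\nabla_\theta Q_{H,\theta_0}^\varepsilon(t)\|_{L^p(\Omega)}^2\,dt\big)^{p/2}$, and one estimates $\|\varepsilon\nabla_\theta Q_{H,\theta_0}^\varepsilon(t)\|_{L^p(\Omega)}$ verbatim as the second derivatives were estimated in Lemma~\ref{e}: for $H<1/2$ it is $\lesssim t^{H-1/2}\int_0^t s^{1/2-H}(t-s)^{-1/2-H}\|1+|X_s^\varepsilon|^N\|_{L^p(\Omega)}\,ds\lesssim t^{1/2-H}$, which is square-integrable on $[0,T]$; for $H>1/2$ one splits off the local term, bounded by $t^{1/2-H}(1+\|X_t^\varepsilon\|_{L^p(\Omega)}^N)$, and the nonlocal term, bounded by $t^{H-1/2}\int_0^t(t-s)^{-H-1/2}s^{1/2-H}\|X_t^\varepsilon-X_s^\varepsilon\|_{L^{2p}(\Omega)}\,ds$, and uses $\|X_t^\varepsilon-X_s^\varepsilon\|_{L^{2p}(\Omega)}\lesssim|t-s|^H$, which follows from Lemma~\ref{s} and the Gaussianity of the increments of $B^H$. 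In both cases the bound is independent of $\varepsilon$, giving the first claim.

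For the second bound, substituting $dZ_t=Q_{H,\theta_0}^\varepsilon(t)\,dt+dW_t$ in $\mathbb{L}_{H,\varepsilon}(\theta)-\mathbb{L}_{H,\varepsilon}(\theta_0)$ and completing the square yields
\[
\mathbb{Y}_{H,\varepsilon}(\theta)=-\tfrac12\int_0^T\!\big(\varepsilon Q_{H,\theta}^\varepsilon(t)-\varepsilon Q_{H,\theta_0}^\varepsilon(t)\big)^2\,dt+\varepsilon\int_0^T\!\big(\varepsilon Q_{H,\theta}^\varepsilon(t)-\varepsilon Q_{H,\theta_0}^\varepsilon(t)\big)\,dW_t ,
\]
where $\varepsilon Q_{H,\theta}^\varepsilon$ depends on $\omega$ only through $X^\varepsilon$, and $\mathbb{Y}_H(\theta)$ is precisely this (deterministic) quadratic functional with the ODE flow $x_\cdot$ of \eqref{ODE} in place of $X^\varepsilon_\cdot$. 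Hence $\mathbb{Y}_{H,\varepsilon}(\theta)-\mathbb{Y}_H(\theta)=A_\varepsilon(\theta)+\varepsilon M_\varepsilon(\theta)$, with $A_\varepsilon(\theta)$ the difference of the two quadratic functionals and $M_\varepsilon(\theta)$ the Wiener integral. Writing the difference of squares as $(a-b)(a+b)$, the Lipschitz dependence of $b$ and of $\nabla_\theta b$ on $x$ (Assumptions~\ref{(A1)}--\ref{(A2)}) together with $\|X_t^\varepsilon-x_t\|_{L^p(\Omega)}\lesssim\varepsilon$ from Lemma~\ref{s} (and, when $H>1/2$, the interpolation bound $\|\nabla_\theta b(X_t^\varepsilon,\theta_0)-\nabla_\theta b(X_s^\varepsilon,\theta_0)-\nabla_\theta b(x_t,\theta_0)+\nabla_\theta b(x_s,\theta_0)\|_{L^p(\Omega)}\lesssim\varepsilon^{1/2}|t-s|^{H/2}$ already used in Lemma~\ref{e}) give $\sup_{\theta\in\Theta}E|A_\varepsilon(\theta)|^p\lesssim\varepsilon^{p/2}$; likewise BDG and the bounds of the previous paragraph give $\sup_{\theta\in\Theta}E|M_\varepsilon(\theta)|^p\lesssim1$. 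The same estimates applied to $\nabla_\theta A_\varepsilon$ and $\nabla_\theta M_\varepsilon$, combined with the Sobolev inequality, upgrade these to $\sup_{0<\varepsilon<1}E\big(\sup_{\theta\in\Theta}|\mathbb{Y}_{H,\varepsilon}(\theta)-\mathbb{Y}_H(\theta)|\big)^p<\infty$; multiplying by $\varepsilon^p\le1$ gives the claim.

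I expect the genuine work to be confined to the case $H>1/2$: there $Q_{H,\theta}^\varepsilon$ is given by a Weyl derivative and produces the singular nonlocal kernel $(t-s)^{-H-1/2}$, so keeping a clean rate in $\varepsilon$ while taking the supremum over $\theta$ forces one to trade Hölder regularity in $(s,t)$ against smallness in $\varepsilon$ of $X^\varepsilon-x$ and of the corresponding increments of $\nabla_\theta b\circ X^\varepsilon$ — exactly the delicate step already carried out in the proof of Lemma~\ref{e}. Everything else is a direct repetition of those computations, now needing only the first $\theta$-derivative of $Q_{H,\theta}^\varepsilon$ (for the gradient bound, and, through the Sobolev step, for the $\mathbb{Y}$-bound), which is controlled by the growth bounds of Assumption~\ref{(A1)} and the moment bounds of Lemma~\ref{s}.
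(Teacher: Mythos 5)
Your proposal is correct and follows essentially the route the paper intends: the paper gives no details for Lemma \ref{4}, stating only that the proof is similar to those of Lemmas \ref{e} and \ref{3}, and your cancellation via $dZ_t=Q^\varepsilon_{H,\theta_0}(t)\,dt+dW_t$, the Burkholder/Minkowski moment bounds with Assumption \ref{(A1)} and Lemma \ref{s}, and the Sobolev-embedding step for the supremum over $\theta$ are precisely those computations filled in. The only caveat is cosmetic: your identification of $\mathbb{Y}_H$ as exactly the deterministic quadratic functional holds only up to the paper's own sign/constant sloppiness in its definition of $\mathbb{Y}_H$, which is immaterial here since the lemma asserts uniform boundedness rather than convergence.
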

From Lemma \ref{4} and the proof of Lemma \ref{e}, we obtain that
\begin{equation}
\label{n}
\varepsilon\partial_\theta\mathbb{L}_{H,\varepsilon}(\theta_0)\xrightarrow{d}N(0,\Gamma_H(\theta_0)),
\end{equation}
as $\varepsilon\rightarrow 0$ by the martingale central limit theorem. Moreover, Lemmas \ref{e}, \ref{3} and the convergence \eqref{n} give the local asymptotic normality of $Z_{H,\varepsilon}(u)$:
\begin{equation*}
  Z_{H,\epsilon}(u)\xrightarrow{d}Z_{H}(u):=\exp\left(\Delta_H(\theta_0)u-\frac{1}{2}\Gamma_H(\theta_0)[u,u]\right),
\end{equation*}
where $\Delta_H(\theta_0)\sim N(0,\Gamma_H(\theta_0))$. Now Theorem 3 of Yoshida (2011) yields the inequality
\begin{equation}
  \sup_{0<\varepsilon<1}P\left[\sup_{|u|\ge r}\mathbb{Z}_{H,\varepsilon}(u)\ge e^{-r}\right]\lesssim r^{-L}
\end{equation}
hold for any $r>0$ and $L>0$. Since $u_\varepsilon:=\varepsilon^{-1}(\theta_\varepsilon-\theta_0)$ maximizes the random field $Z_{H,\varepsilon}$, the sequence $\{f(u_\varepsilon)\}_{\varepsilon}$ is uniformly integrable for every continuous function $f$ such that for every $x\in\mathbb{R}$, $f(x)\lesssim 1+|x|^N$ for some $N>0$. Indeed,
\begin{equation*}
  \begin{aligned}
    \sup_{0<\varepsilon<1}P\left(|u_\varepsilon|\ge r\right)&\le\sup_{0<\varepsilon<1}P\left(\sup_{|u|\ge r}\mathbb{Z}_{H,\varepsilon}(u)\ge\mathbb{Z}_{H,\varepsilon}(0)\right)
    \lesssim r^{-L},
  \end{aligned}
\end{equation*}
for every $r>0$ and $L>0$. Thus
\begin{equation*}
  \begin{aligned}
     \sup_{0<\varepsilon<1}E[|f(u_\varepsilon)|]&\lesssim 1+\int_0^\infty\sup_{0<\varepsilon<1} P\left(|u_\varepsilon|>r^{1/N}\right)dr<\infty.
  \end{aligned}
\end{equation*}
  Let $B(R):=\left\{u\in\mathbb{R}^d; |u|\le R\right\}$. In the sequel, we prove that
\begin{equation}
  \label{f}
  \log\mathbb{Z_{H,\varepsilon}}\xrightarrow{d}\log\mathbb{Z}_{H,\varepsilon}~~~{\rm in}~C(B(R)),
\end{equation}
as $\varepsilon\rightarrow 0$. If we can show the convergence \eqref{f}, we obtain the asymptotic normality:
\begin{equation*}
   \varepsilon^{-1}(\theta_\varepsilon-\theta_0)\xrightarrow{d}N(0,\Gamma_H(\theta_0)^{-1})
\end{equation*}
as $\varepsilon\rightarrow 0$ by Theorem 5 in Yoshida \cite{Yoshida}. Due to linearrity in $u$ of the weak convergence term $\varepsilon\nabla_\theta\mathbb{L}_{H,\varepsilon}(\theta_0)[u]$, the convergence of finite--dimensional distribution holds true. It remains to show the tightness of the family $\left\{\log\mathbb{Z}_{H,\varepsilon}(u)\right\}_{u\in B(R)}$. By the Kolmogorov tightness criterion, it suffices to show that
for every $R>0$ there exists a constant $p>0,~\gamma>d$ and $C>0$ such that
\begin{equation}
  \label{tight}
  E\left|\log\mathbb{Z}_{H,\varepsilon}(u_1)-\log\mathbb{Z}_{H,\varepsilon}(u_2)\right|^p\le C|u_1-u_2|^\gamma,
\end{equation}
for $u_1,u_2\in B(R).$ For a number $p>0$ large enough, the inequality \eqref{tight} is shown easily by Lemmas \ref{e}, \ref{3} and \ref{4}. Therefore, we complete the proof.
\paragraph{Acknowledgments}
The second author was partially supported by JSPS KAKENHI Grant Numbers JP21K03358 and JST CREST JPMJCR14D7, Japan.

\end{document}